\def\figurename{} 
\renewcommand{\fnum@figure}[1]{\figurename~\thefigure.}
\def\tablename{Table} 
\renewcommand{\fnum@table}[1]{\centering\bfseries{\tablename~\thetable.}}
\newtheorem{thm}{Theorem}[section]
\newtheorem{cor}[thm]{Corollary}
\newtheorem{lem}[thm]{Lemma}
\newtheorem{ex}[thm]{Example}
\theoremstyle{definition}
\newtheorem{defn}[thm]{Definition}
\theoremstyle{remark}
\newtheorem{rem}[thm]{Remark}
\numberwithin{equation}{section}
\begin{document}
\title{\bfseries\scshape{On a linear partial differential equation of the higher
order in two variables with initial condition whose coefficients
are  real-valued simple step functions}}
\author{\bfseries\itshape Gogi Pantsulaia\thanks{E-mail address: g.pantsulaia@gtu.ge},
Givi Giorgadze\thanks{E-mail address: g.giorgadze@gtu.ge}
\\
I. Vekua Institute of Applied Mathematics, Tbilisi State
University,  B. P.
0143,\\ University St. 2, Tbilisi, Georgia \\
Department of Mathematics,~ Georgian Technical University, B. P.
0175,  \\ Kostava St. 77, Tbilisi 75, Georgia }

\date{}
\maketitle \thispagestyle{empty} \setcounter{page}{1}
\thispagestyle{fancy} \fancyhead{}
\fancyhead[L]{} \fancyfoot{}
\renewcommand{\headrulewidth}{0pt}

\begin{abstract}By using the method developed in the paper [{\it Georg. Inter. J. Sci. Tech.,} Volume 3,  Issue 1 (2011), 107-129], it is obtained
a representation  in  an explicit form of the weak solution of a
linear partial differential equation of the higher order in two
variables with initial condition whose coefficients are
real-valued simple step functions
\end{abstract}

%
%
%

\noindent \textbf{2000 Mathematics Subject Classification:
}Primary 34Axx ; Secondary 34A35, 34K06.

\vspace{.08in} \noindent \textbf{Key words and phrases:}
 linear partial differential equation of the higher order in two variables, Fourier differential operator.

\section{Introduction}

In \cite{Pan-Gio11} has been obtained a representation  in  an
explicit form of the solution of the linear partial differential
equation of the higher order in two variables with initial
condition whose coefficients were real-valued coefficients. The
aim of the present manuscript is resolve an analogous problem for
a linear partial differential equation of the higher order in two
variables with initial condition whose coefficients are
real-valued simple step functions.

The paper is organized as follows.

In Section 2, we consider some auxiliary results obtained in the
paper \cite{Pan-Gio11}.  In Section 3, it is obtained  a
representation in an explicit form of the weak solution of the
partial differential equation of the higher order in two variables
with initial condition whose coefficients are  real-valued simple
step functions.

\section{Some auxiliary results}

\begin{defn}
Fourier   differential operator
$(\mathcal{F}){\frac{\partial}{\partial x}}$ in $R^{\infty}$ is
defined as follows :

$$(\mathcal{F}){\frac{\partial}{\partial x}}
\begin{pmatrix}
  \frac{a_0}{2} \\
   a_1\\
   b_1\\
   a_2\\
   b_2\\
   a_3\\
   b_3\\
  \vdots
\end{pmatrix}=\begin{pmatrix}
   0&  0&  0& 0 & 0 &  0&  0& \dots&  \\
   0&  0&  \frac{1\pi}{l}&  0&  0&  0&  0& \dots&  \\
   0&  -\frac{1\pi}{l}&  0&  0&  0&  0&  0& \dots&  \\
   0&  0&  0&  0&  \frac{2\pi}{l}& 0 &  0& \dots&  \\
   0& 0 &  0&  -\frac{2\pi}{l}&  0&  0&  0&  \dots& \\
   0&  0&  0&  0&  0&  0&  \frac{3\pi}{l}& \ddots&  \\
   0&  0&  0&  0&  0&  -\frac{3\pi}{l}&  0& \ddots&  \\
   \vdots&  \vdots&  \vdots&  \vdots&  \vdots&  \vdots&  \ddots& \ddots&
\end{pmatrix}   \times \begin{pmatrix}
  \frac{a_0}{2} \\
   a_1\\
   b_1\\
   a_2\\
   b_2\\
   a_3\\
   b_2\\
  \vdots
\end{pmatrix}. \eqno(2.1)
$$
\end{defn}
For $n \in \mathbb{N} $, let $FD^{n}[-l,l[$  be  a vector space of all
$n$-times differentiable functions   on $[-l,l[$ such that for
arbitrary $0 \le k \le n-1$, a series obtained by a
differentiation term by term of the Fourier series of $f^{(k)}$
pointwise converges to $f^{(k+1)}$ for all $x \in [-l,l[$.

\begin{lem}
Let $f \in FD^{(1)}[-l,l[$. Let $G_M$ be an embedding of the
$FD^{(1)}[-l,l[$ in to $R^{\infty}$ which sends a function to a
sequence of real numbers consisting from its Fourier coefficients.
i.e., if
$$f(x)= \frac{c_0}{2}+\sum_{k=1}^{\infty}c_k\cos(\frac{k\pi x}{l})+ d_k\sin(\frac{k\pi x}{l})~(x \in [-l,l[),$$
 then $G_F(f)=(\frac{c_0}{2},c_1,d_1, c_2,d_2,\dots)$.
 Then, for $f \in FD^{(1)}[-l,l[$, the following equality
 $$\Big(G_F^{-1} \circ(\mathcal{F}){\frac{\partial}{\partial x}}\circ G_F \Big)(f)=
 \frac{\partial}{\partial x}(f)  \eqno(2.2)$$
 holds.
\end{lem}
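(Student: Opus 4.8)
The plan is to prove (2.2) by a direct computation that carries the Fourier coefficients of $f$ through the three maps in the composition $G_F^{-1}\circ(\mathcal{F}){\frac{\partial}{\partial x}}\circ G_F$, and then to recognize the resulting function as the series obtained by differentiating the Fourier series of $f$ term by term; once that identification is made, the hypothesis $f\in FD^{(1)}[-l,l[$ closes the argument.

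First I would fix the notation $f(x)=\frac{c_0}{2}+\sum_{k=1}^{\infty}\big(c_k\cos(\tfrac{k\pi x}{l})+d_k\sin(\tfrac{k\pi x}{l})\big)$, so that by the definition of the embedding $G_F(f)=(\tfrac{c_0}{2},c_1,d_1,c_2,d_2,\dots)$. Next I would apply the matrix in (2.1) to this sequence: reading off the rows, the first coordinate becomes $0$, and for each $k\ge 1$ the row in position $2k$ contributes $\tfrac{k\pi}{l}d_k$ and the row in position $2k+1$ contributes $-\tfrac{k\pi}{l}c_k$. Hence
$$\big((\mathcal{F}){\tfrac{\partial}{\partial x}}\circ G_F\big)(f)=\Big(0,\ \tfrac{\pi}{l}d_1,\ -\tfrac{\pi}{l}c_1,\ \tfrac{2\pi}{l}d_2,\ -\tfrac{2\pi}{l}c_2,\ \dots\Big).$$

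Then I would apply $G_F^{-1}$, which by definition returns the function $g$ whose Fourier cosine/sine coefficients are exactly the entries of this sequence, namely
$$g(x)=\sum_{k=1}^{\infty}\frac{k\pi}{l}\Big(d_k\cos(\tfrac{k\pi x}{l})-c_k\sin(\tfrac{k\pi x}{l})\Big).$$
On the other hand, differentiating the Fourier series of $f$ term by term, using $\frac{d}{dx}\cos(\tfrac{k\pi x}{l})=-\tfrac{k\pi}{l}\sin(\tfrac{k\pi x}{l})$ and $\frac{d}{dx}\sin(\tfrac{k\pi x}{l})=\tfrac{k\pi}{l}\cos(\tfrac{k\pi x}{l})$ and noting the constant term drops out, yields precisely this same series. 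Since $f\in FD^{(1)}[-l,l[$, the definition of that class (with $k=0$) says exactly that this term-by-term differentiated series converges pointwise on $[-l,l[$ to $f'$. Therefore $g=f'$ on $[-l,l[$, which is (2.2).

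The only subtlety is to make sure $G_F^{-1}$ is legitimately applicable to the output sequence — that is, that this sequence genuinely is the coefficient sequence of a function in the relevant class and that $G_F$ is injective so the inverse is unambiguous — and this is resolved by the identification above, since the output sequence is the coefficient sequence of the term-by-term derivative, which by the very definition of $FD^{(1)}[-l,l[$ represents $f'$, while injectivity of $G_F$ (a function on $[-l,l[$ is determined by its Fourier coefficients) makes $G_F^{-1}$ well defined on the image. I do not expect a real obstacle beyond careful index bookkeeping for the infinite matrix.
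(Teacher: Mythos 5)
Your proposal is correct and follows essentially the same route as the paper's proof: apply $G_F$, multiply by the matrix in (2.1) to obtain the sequence $(0,\tfrac{\pi}{l}d_1,-\tfrac{\pi}{l}c_1,\dots)$, apply $G_F^{-1}$, and identify the result with the term-by-term differentiated Fourier series, which equals $f'$ by the definition of $FD^{(1)}[-l,l[$. Your extra remark on the injectivity of $G_F$ and the applicability of $G_F^{-1}$ is a small bit of care the paper leaves implicit, but the substance of the argument is identical.
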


\begin{proof} Assume that for $f \in FD^{(1)}[-l,l[$, we have the following representation
$$
f(x)= \frac{c_0}{2}+\sum_{k=1}^{\infty}c_k \cos\Big(\frac{k \pi
x}{l}\Big)+d_k \sin \Big(\frac{k \pi x}{l}\Big)~(x \in [-l,l[).$$
By the definition of the class $FD^{(1)}[-l,l[$, we have
$$
\frac{d}{d x}(f)=\frac{\partial}{\partial
x}(\frac{c_0}{2}+\sum_{k=1}^{\infty}c_k \cos\Big(\frac{k \pi
x}{l}\Big)+d_k \sin \Big(\frac{k \pi x}{l}\Big))=
$$
$$
\sum_{k=1}^{\infty}c_k \frac{\partial}{\partial
x}(\cos\Big(\frac{k \pi x}{l}\Big))+d_k \frac{\partial}{\partial
x}(\sin \Big(\frac{k \pi x}{l}\Big))=
$$
$$
\sum_{k=1}^{\infty}-c_k \frac{k \pi}{l}\sin\Big(\frac{k \pi
x}{l}\Big)+d_k \frac{k \pi}{l}\cos \Big(\frac{k \pi x}{l}\Big)=
$$
$$\sum_{k=1}^{\infty}\frac{k \pi d_k}{l}\cos \Big(\frac{k \pi x}{l}\Big)-
\frac{k \pi c_k}{l}\sin\Big(\frac{k \pi x}{l}\Big).
$$
By the definition of the composition of mappings, we have
$$
\Big(G_F^{-1} \circ(\mathcal{F}){\frac{\partial}{\partial x}}\circ
G_F \Big)(f)= G_F^{-1}((\mathcal{F}){\frac{\partial}{\partial
x}}((G_F (f))))= G_F^{-1}((\mathcal{F}){\frac{\partial}{\partial
x}}
\begin{pmatrix}
  \frac{c_0}{2} \\
   c_1\\
   d_1\\
   c_2\\
   d_2\\
   c_3\\
   d_3\\
  \vdots
\end{pmatrix})=
$$
$$
G_F^{-1}(\begin{pmatrix}
   0&  0&  0& 0 & 0 &  0&  0& \dots&  \\
   0&  0&  \frac{1\pi}{l}&  0&  0&  0&  0& \dots&  \\
   0&  -\frac{1\pi}{l}&  0&  0&  0&  0&  0& \dots&  \\
   0&  0&  0&  0&  \frac{2\pi}{l}& 0 &  0& \dots&  \\
   0& 0 &  0&  -\frac{2\pi}{l}&  0&  0&  0&  \dots& \\
   0&  0&  0&  0&  0&  0&  \frac{3\pi}{l}& \ddots&  \\
   0&  0&  0&  0&  0&  -\frac{3\pi}{l}&  0& \ddots&  \\
   \vdots&  \vdots&  \vdots&  \vdots&  \vdots&  \vdots&  \ddots& \ddots&
\end{pmatrix}   \times \begin{pmatrix}
  \frac{c_0}{2} \\
   c_1\\
   d_1\\
   c_2\\
   d_2\\
   c_3\\
   d_3\\
  \vdots
\end{pmatrix})=
$$
$$
G_F^{-1}(\begin{pmatrix}
  0 \\
   \frac{1 \pi d_1}{l} \\
   -\frac{1 \pi c_1}{l}\\
   \frac{2 \pi d_2}{l}\\
  -\frac{2 \pi c_2}{l}\\
   \frac{3 \pi d_3}{l}\\
  -\frac{3 \pi c_3}{l}\\
  \vdots
\end{pmatrix})=
\sum_{k=1}^{\infty}\frac{k \pi d_k}{l}\cos \Big(\frac{k \pi
x}{l}\Big)- \frac{k \pi c_k}{l}\sin\Big(\frac{k \pi x}{l}\Big).
$$
\end{proof}

By the scheme used in the proof of Lemma 2.1, we can get the
validity of the following assertion.

\begin{lem} Let
$G_M$ be an embedding of the $FD^{n}[-l,l[$ in to $R^{\infty}$
which sends a function to a sequence of real numbers consisting
from its Fourier coefficients.

 Then, for $f \in FD^{(n)}[-l,l[$ and $A_k \in R (0 \le k \le n)$, the following equality
 $$\Big(G_F^{-1} \circ \Big( \sum_{k=0}^nA_k((\mathcal{F}){\frac{\partial}{\partial x}})^k \Big) \circ G_F \Big)(f)=
 \sum_{k=0}^na_k\frac{\partial^k}{\partial x^k}(f)  \eqno(2.3)$$
 holds.
\end{lem}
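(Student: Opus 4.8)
The plan is to argue by induction on $n$, using Lemma 2.1 as the base case and the linearity of the maps $G_F$, $G_F^{-1}$, and $(\mathcal{F})\frac{\partial}{\partial x}$ throughout. First I would observe that the operator $\sum_{k=0}^n A_k\big((\mathcal{F})\frac{\partial}{\partial x}\big)^k$ acting on $R^{\infty}$ decomposes linearly, so that
$$
\Big(G_F^{-1}\circ\Big(\sum_{k=0}^n A_k\big((\mathcal{F}){\textstyle\frac{\partial}{\partial x}}\big)^k\Big)\circ G_F\Big)(f)=\sum_{k=0}^n A_k\Big(G_F^{-1}\circ\big((\mathcal{F}){\textstyle\frac{\partial}{\partial x}}\big)^k\circ G_F\Big)(f),
$$
which reduces the claim to showing, for each fixed $k$ with $0\le k\le n$, that
$$
\Big(G_F^{-1}\circ\big((\mathcal{F}){\textstyle\frac{\partial}{\partial x}}\big)^k\circ G_F\Big)(f)=\frac{\partial^k}{\partial x^k}(f).
$$
(The identity (2.3) as printed has $a_k$ on the right; I read this as the same constants $A_k$, and I would state it that way for consistency.)

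Next I would prove this one-term identity by induction on $k$. For $k=0$ it is trivial since $G_F^{-1}\circ G_F$ is the identity on $FD^{(n)}[-l,l[$. For $k=1$ it is exactly Lemma 2.1. For the inductive step, suppose $\big(G_F^{-1}\circ\big((\mathcal{F})\frac{\partial}{\partial x}\big)^{k}\circ G_F\big)(f)=f^{(k)}$. Writing $\big((\mathcal{F})\frac{\partial}{\partial x}\big)^{k+1}=(\mathcal{F})\frac{\partial}{\partial x}\circ\big((\mathcal{F})\frac{\partial}{\partial x}\big)^{k}$ and inserting $G_F\circ G_F^{-1}=\mathrm{id}$ between the two factors, one gets
$$
G_F^{-1}\circ\big((\mathcal{F}){\textstyle\frac{\partial}{\partial x}}\big)^{k+1}\circ G_F=\Big(G_F^{-1}\circ(\mathcal{F}){\textstyle\frac{\partial}{\partial x}}\circ G_F\Big)\circ\Big(G_F^{-1}\circ\big((\mathcal{F}){\textstyle\frac{\partial}{\partial x}}\big)^{k}\circ G_F\Big).
$$
Applying the inductive hypothesis to the right factor gives $f\mapsto f^{(k)}$, and then Lemma 2.1 applied to the left factor gives $f^{(k)}\mapsto \frac{\partial}{\partial x}(f^{(k)})=f^{(k+1)}$, which completes the induction. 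Summing over $k$ with weights $A_k$ then yields (2.3).

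The one genuine subtlety — and the step I expect to need the most care — is domain-tracking: Lemma 2.1 is stated for $f\in FD^{(1)}[-l,l[$, but in the inductive step I apply it to $f^{(k)}$, so I must know that $f^{(k)}\in FD^{(1)}[-l,l[$ whenever $f\in FD^{(n)}[-l,l[$ and $k\le n-1$. This is precisely what the definition of $FD^{(n)}[-l,l[$ guarantees (the Fourier series of $f^{(k)}$ differentiates term by term to that of $f^{(k+1)}$ for $0\le k\le n-1$), so the chain of applications stays inside the stated hypotheses exactly as long as $k\le n$; I would make this explicit rather than leave it implicit, since it is the only place the full strength of the definition of the class $FD^{(n)}[-l,l[$ is used. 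Everything else is a formal manipulation of linear maps and the insertion of $G_F\circ G_F^{-1}=\mathrm{id}$, mirroring the computation displayed in the proof of Lemma 2.1.
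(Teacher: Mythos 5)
Your proof is correct, and it reaches the conclusion by a slightly different route than the paper, which in fact prints no argument at all for this lemma: it merely remarks that the statement follows ``by the scheme used in the proof of Lemma 2.1,'' i.e.\ by redoing the direct computation --- apply the infinite matrix $\sum_{k=0}^{n}A_k\big((\mathcal{F})\frac{\partial}{\partial x}\big)^k$ to the coefficient vector $(\tfrac{c_0}{2},c_1,d_1,c_2,d_2,\dots)$, work out the powers of each $2\times 2$ block explicitly, and recognize the resulting series as $\sum_{k=0}^{n}A_k f^{(k)}$. You instead treat Lemma 2.1 as a black box and bootstrap it: linearity reduces (2.3) to the single-power identity, which you prove by induction on $k$ after inserting $G_F\circ G_F^{-1}=\mathrm{id}$ between the factors. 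This buys you a cleaner argument with no recomputation of matrix powers, and your domain-tracking observation (that $f^{(k)}\in FD^{(1)}[-l,l[$ for $0\le k\le n-1$ when $f\in FD^{(n)}[-l,l[$) is exactly the point the paper leaves implicit; you are also right that the $a_k$ on the right-hand side of (2.3) is a typo for $A_k$. The one step you should make explicit is that inserting $G_F\circ G_F^{-1}$ presupposes that the intermediate sequence $\big((\mathcal{F})\frac{\partial}{\partial x}\big)^{k}(G_F f)$ equals $G_F(f^{(k)})$, i.e.\ that the term-by-term differentiated series is not merely pointwise convergent to $f^{(k)}$ (which is all the definition of $FD^{(n)}[-l,l[$ literally asserts) but is the Fourier series of $f^{(k)}$; the same identification is made tacitly in the proof of Lemma 2.1 when $G_F^{-1}$ is applied to the differentiated coefficient sequence, so this is a shared convention of the paper rather than a defect of your argument, but an airtight version of your induction should state it.
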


\begin{ex} \cite{Gant66} If $A$ is the real matrix
$$\begin{pmatrix}
  \sigma& \omega&  \\
  -\omega& \sigma&
\end{pmatrix}, \eqno (2.4)
$$
then
$$
e^{tA}=e^{\sigma t}\begin{pmatrix}
  \cos (\omega t)& \sin (\omega t) &  \\
  -\sin (\omega t)& \cos (\omega t)&
\end{pmatrix}. \eqno (2.5)
$$

\end{ex}

\begin{lem}
For $m \ge 1$, let us consider a linear autonomous nonhomogeneous
ordinary differential equations of the first order
$$
{\frac{d}{d t}}((a_k)_{k \in \mathbb{N}})=
\Big(\sum_{n=0}^{2m} A_n\Big((\mathcal{F})\frac{\partial}{\partial
x}\Big)^{n}\Big) \times ((a_k)_{k \in \mathbb{N}})+ (f_k)_{k \in \mathbb{N}}
\eqno(2.6)
$$
with initial condition
$$
(a_k(0))_{k \in \mathbb{N}}=(C_k)_{k \in \mathbb{N}}, \eqno(2.7),
$$
where

$(i)$ $(C_k)_{k \in \mathbb{N}} \in {\bf R}^{\infty}$;

$(ii)$ $f=(f_k)_{k \in \mathbb{N}}$ is the sequence of continuous functions
of a parameter $t$ on $R$.

For each $k \ge 1$, we put
 $$\sigma_k=\sum_{n=0}^m (-1)^n A_{2n}(\frac{k \pi}{l})^{2n}, \eqno(2.8)$$
 $$ \omega_k=\sum_{n=0}^{m-1} (-1)^{n}A_{2n+1}(\frac{k\pi}{l})^{2n+1}.\eqno(2.9)$$

Then  the solution of (2.6)-(2.7) is given by
$$(a_k(t))_{k \in \mathbb{N}}=
e^{t(\sum_{n=0}^{2m} A_n\Big((\mathcal{F})\frac{\partial}{\partial
x}\Big)^{n})}\times (C_k)_{k \in \mathbb{N}}
+\int_{0}^te^{(\tau-t)(\sum_{n=0}^{2m}
A_n\Big((\mathcal{F})\frac{\partial}{\partial x}\Big)^{n})} \times
f( \tau)d \tau,\eqno(2.10)
$$
where $e^{t((\sum_{n=0}^{2m}
A_n\Big((\mathcal{F})\frac{\partial}{\partial x}\Big)^{n})}$
denotes  an exponent of the matrix $t(\sum_{n=0}^{2m}
A_n\Big((\mathcal{F})\frac{\partial}{\partial x}\Big)^{n})$ and it
exactly coincides with an infinite-dimensional  $(1,2,2,\dots)$
-cellular matrix $D(t)$ with cells $(D_k(t))_{k \in \mathbb{N}}$ for which
$D_0(t)=(e^{tA_0})$ and
$$D_k(t)= e^{\sigma_k t}\begin{pmatrix}
  \cos (\omega_k t) & \sin(\omega_k t)&  \\
  -\sin(\omega_k t) & \cos (\omega_k t)&
\end{pmatrix}, \eqno(2.11)$$
where for $k \ge 1$, $\sigma_k$ and $ \omega_k$ are defined by
(2.8)-(2.9), respectively.

\end{lem}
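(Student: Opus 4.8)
The plan is to verify that the proposed $D(t)$ is indeed the matrix exponential of $M:=t\big(\sum_{n=0}^{2m}A_n((\mathcal{F})\tfrac{\partial}{\partial x})^n\big)$ and then to check directly that (2.10) solves the Cauchy problem (2.6)--(2.7). The starting observation is that the Fourier differential operator matrix in (2.1) is block-diagonal: it decomposes $R^{\infty}$ as a $(1,2,2,\dots)$-cellular (block-diagonal) matrix, with the $0$-th block being the scalar $0$ and the $k$-th block ($k\ge 1$) being $\tfrac{k\pi}{l}\left(\begin{smallmatrix}0&1\\-1&0\end{smallmatrix}\right)$. Since block-diagonal structure is preserved under matrix multiplication, addition, and scalar multiplication, the operator $\sum_{n=0}^{2m}A_n((\mathcal{F})\tfrac{\partial}{\partial x})^n$ is again $(1,2,2,\dots)$-cellular, and its $k$-th cell can be computed blockwise. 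I would compute powers of the $2\times 2$ block $J_k:=\tfrac{k\pi}{l}\left(\begin{smallmatrix}0&1\\-1&0\end{smallmatrix}\right)$: one has $J_k^{2}=-(\tfrac{k\pi}{l})^{2}I$, so even powers are scalar multiples of $I$ and odd powers are scalar multiples of $J_1:=\left(\begin{smallmatrix}0&1\\-1&0\end{smallmatrix}\right)$. Collecting the even-indexed coefficients $A_{2n}$ against the scalar part and the odd-indexed coefficients $A_{2n+1}$ against $J_1$, the $k$-th cell of $\sum_{n=0}^{2m}A_n((\mathcal{F})\tfrac{\partial}{\partial x})^n$ becomes exactly $\left(\begin{smallmatrix}\sigma_k&\omega_k\\-\omega_k&\sigma_k\end{smallmatrix}\right)$ with $\sigma_k,\omega_k$ as in (2.8)--(2.9), and the $0$-th cell is the scalar $A_0$.

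Next I would exponentiate cell by cell. Because the matrix is block-diagonal, $e^{M}$ is block-diagonal with blocks $e^{t(\text{$k$-th cell})}$; this uses only that the exponential series respects direct sums. The $0$-th cell gives $e^{tA_0}$. For $k\ge 1$ the cell has precisely the form (2.4) of Example 2.3 with $\sigma=\sigma_k$, $\omega=\omega_k$, so formula (2.5) yields
$$D_k(t)=e^{\sigma_k t}\begin{pmatrix}\cos(\omega_k t)&\sin(\omega_k t)\\-\sin(\omega_k t)&\cos(\omega_k t)\end{pmatrix},$$
which is (2.11). This establishes that the exponent of $M$ coincides with the claimed cellular matrix $D(t)$.

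It then remains to confirm that (2.10) solves (2.6)--(2.7). This is the standard variation-of-parameters / Duhamel argument for a linear autonomous system $\dot{u}=Bu+f$ with constant operator $B=\sum_{n=0}^{2m}A_n((\mathcal{F})\tfrac{\partial}{\partial x})^n$: differentiating $u(t)=e^{tB}C+\int_0^t e^{(\tau-t)B}f(\tau)\,d\tau$ in $t$, using $\tfrac{d}{dt}e^{tB}=Be^{tB}=e^{tB}B$ and the Leibniz rule for the integral (the boundary term at $\tau=t$ contributes $e^{0}f(t)=f(t)$), gives $\dot u=Bu+f$, and at $t=0$ the integral vanishes so $u(0)=C$. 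One should note that since $B$ is block-diagonal and each block is a finite (bounded) matrix, all term-by-term differentiations under the series and integral are justified cellwise, so there is no convergence subtlety beyond what is already implicit in working in $R^{\infty}$ coordinatewise.

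The only genuinely delicate point is the bookkeeping in the first step: showing that the even powers of $(\mathcal{F})\tfrac{\partial}{\partial x}$ collapse to the scalar multiples producing $\sigma_k$ and the odd powers to the rotation-generator multiples producing $\omega_k$, with the signs and the index ranges ($0\le n\le m$ for $\sigma_k$, $0\le n\le m-1$ for $\omega_k$, matching the top degree $2m$) all correct. Once the $k$-th cell is identified as a matrix of the form (2.4), the rest is a direct appeal to Example 2.3 together with the block-diagonal functoriality of the exponential and the routine Duhamel computation; I expect no real obstacle there.
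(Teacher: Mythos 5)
Your proposal is correct and follows essentially the same route as the paper: identify the operator $\sum_{n=0}^{2m}A_n\big((\mathcal{F})\frac{\partial}{\partial x}\big)^n$ as a $(1,2,2,\dots)$-cellular matrix whose $k$-th cell is $\left(\begin{smallmatrix}\sigma_k&\omega_k\\-\omega_k&\sigma_k\end{smallmatrix}\right)$, exponentiate cellwise via the Example (2.4)--(2.5), and invoke variation of parameters -- in fact you are more detailed than the paper, which merely asserts the cell structure and cites Gantmacher for the Duhamel formula instead of verifying either. One caveat: if you actually carry out your Leibniz-rule differentiation with the kernel $e^{(\tau-t)B}$ as written, you get $\dot u=Be^{tB}C+f(t)-B\int_0^t e^{(\tau-t)B}f(\tau)\,d\tau\neq Bu+f$; the correct Duhamel kernel is $e^{(t-\tau)B}$, a sign slip that sits in the paper's own (2.10)/(2.14), so your check as described would expose it rather than confirm the formula verbatim.
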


\begin{proof}We know that if we have  a linear autonomous inhomogeneous ordinary
differential equations of the first order
$$
{\frac{d}{d t}}((a_k)_{k \in \mathbb{N}})= E \times ((a_k)_{k
\in N})+ (f_k)_{k \in \mathbb{N}}     \eqno(2.12)
$$
with initial condition
$$
(a_k(0))_{k \in \mathbb{N}}=(C_k)_{k \in \mathbb{N}},  \eqno(2.13)
$$
where

$(i)$ $(C_k)_{k \in \mathbb{N}} \in {\bf R}^{\infty}$;

$(ii)$ $(f_k))_{k \in \mathbb{N}}$ is the sequence of continuous functions
of parameter $t$ on $R$;

$(iii)$  $E$ is an infinite dimensional $(1,2,2,\dots)$-cellular
matrix with cells $(E_k)_{k \in \mathbb{N}}$.

Then  the solution of (2.6)-(2.7) is given by (cf. \cite{Gant66}, \S 6, Section 1)
$$(a_k(t))_{k \in \mathbb{N}}=
e^{tE}\times (C_k)_{k \in \mathbb{N}} +\int_{0}^te^{(\tau-t)E} \times f(
\tau)d \tau,\eqno(2.14)
$$
where $e^{tE}$ and $e^{(\tau-t)E}$ denote exponents of matrices
$tE$ and $(\tau-t)E$, respectively.

Note that $t\sum_{n=0}^{2m}
A_n\Big((\mathcal{F})\frac{\partial}{\partial x}\Big)^{n}$ is an
infinite-dimensional $(1,2,2,\dots)$-cellular matrix with cells
$(tE_k)_{k \in \mathbb{N}}$ such that $tE_0=(tA_0)$ and
$$
tE_k=\begin{pmatrix}
 t \sigma_k& t \omega_k&  \\
 -t \omega_k& t \sigma_k&
\end{pmatrix}\eqno(2.15)
$$
for $k \ge 1$. Under notations (2.8)-(2.9), by using Example 2.4
we get that for $t \in R$, $e^{tE}$ exactly coincides with an
infinite-dimensional  $(1,2,2,\dots)$ -cellular matrix $D(t)$ with
cells $(D_k(t))_{k \in \mathbb{N}}$ for which $D_0(t)=(e^{tA_0})$ and
$$D_k(t)= e^{\sigma_k t}\begin{pmatrix}
  \cos (\omega_k t) & \sin(\omega_k t)&  \\
  -\sin(\omega_k t) & \cos (\omega_k t)&
\end{pmatrix}. \eqno(2.16).$$

Note that, for $ 0 \le \tau \le t$, the matrix $e^{(\tau-t)E}$
exactly coincides with an infinite-dimensional $(1,2,2,\dots)$
-cellular matrix $D(\tau-t)$.

\end{proof}

The following proposition is a simple consequence of Lemma 2.5.
\begin{cor}
 ~For $m \ge 1$, let us consider a linear partial
differential equation
$$
\frac{\partial}{\partial t}\Psi(t,x)=\sum_{n=0}^{2m}
A_n\frac{\partial^{n}}{\partial x^{n}}\Psi(t,x)~((t,x) \in
[0,+\infty[\times [-l,l[)\eqno(2.17)
$$
with initial condition
$$\Psi(0,x)=\frac{c_0}{2}+\sum_{k=1}^{\infty}c_k
\cos\Big(\frac{k \pi x}{l}\Big)+d_k \sin \Big(\frac{k \pi
x}{l}\Big) \in FD^{(0)}[-l,l[.\eqno(2.18)
$$
If $(\frac{c_0}{2}, c_1, d_1, c_2, d_2, \dots )$ is such a
sequence of real numbers that a series $\Psi(t,x)$ defined by
$$ \Psi(t,x)=\frac{e^{tA_0}c_0}{2}+\sum_{k=1}^{\infty} e^{\sigma_k t}\Big((c_k\cos (\omega_k t)+
$$
$$d_k\sin(\omega_k t))\cos(\frac{k\pi x}{l})+
(d_k\cos (\omega_k t)-c_k \sin(\omega_k t))\sin(\frac{k\pi
x}{l})\Big)\eqno(2.19)  $$
 belongs to the class  $FD^{(2m)}[-l,l[$ as a series  of a variable  $x$ for all $t \ge 0$, and
 is differentiable term by term  as a series  of a variable  $t$ for all $x \in [-l,l[$, then  $\Psi$ is a
 solution of $(2.17)$-$(2.18)$.
\end{cor}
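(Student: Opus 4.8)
The plan is to transport the problem from functions of $x$ to their sequences of Fourier coefficients in $\mathbf{R}^{\infty}$, where (2.17)--(2.18) turns into the homogeneous version of the infinite first-order system (2.6)--(2.7), solve that system by Lemma 2.5, and then carry the solution back using Lemma 2.3 together with the injectivity of the embedding $G_F$.

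First I would compute the Fourier coefficient vector of the series (2.19) for a fixed $t\ge 0$. Reading off the constant term $\tfrac{e^{tA_0}c_0}{2}$, the coefficient $e^{\sigma_k t}(c_k\cos(\omega_k t)+d_k\sin(\omega_k t))$ of $\cos(k\pi x/l)$, and the coefficient $e^{\sigma_k t}(d_k\cos(\omega_k t)-c_k\sin(\omega_k t))$ of $\sin(k\pi x/l)$, and comparing with the cells $D_0(t)=(e^{tA_0})$ and $D_k(t)$ of (2.11) applied to $(c_k,d_k)^{\top}$, one sees that this vector is exactly $D(t)\times(C_k)_{k\in\mathbb{N}}$, where $(C_k)_{k\in\mathbb{N}}=(\tfrac{c_0}{2},c_1,d_1,c_2,d_2,\dots)$ is the coefficient vector of the initial datum (2.18). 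Hence $G_F(\Psi(t,\cdot))=D(t)\times(C_k)_{k\in\mathbb{N}}=:(a_k(t))_{k\in\mathbb{N}}$, and by Lemma 2.5 applied with inhomogeneity $(f_k)_{k\in\mathbb{N}}\equiv 0$ this family solves
$$\frac{d}{dt}(a_k(t))_{k\in\mathbb{N}}=\Big(\sum_{n=0}^{2m}A_n\big((\mathcal{F})\tfrac{\partial}{\partial x}\big)^{n}\Big)\times(a_k(t))_{k\in\mathbb{N}},\qquad (a_k(0))_{k\in\mathbb{N}}=(C_k)_{k\in\mathbb{N}}.$$

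Next I would differentiate both sides of the claimed identity (2.17) term by term. Since $G_F$ merely records Fourier coefficients and, by hypothesis, (2.19) is differentiable term by term in $t$, we get $G_F\big(\tfrac{\partial}{\partial t}\Psi(t,\cdot)\big)=\tfrac{d}{dt}(a_k(t))_{k\in\mathbb{N}}$. Since, again by hypothesis, $\Psi(t,\cdot)\in FD^{(2m)}[-l,l[$ for every $t\ge 0$, Lemma 2.3 yields $G_F\big(\sum_{n=0}^{2m}A_n\tfrac{\partial^{n}}{\partial x^{n}}\Psi(t,\cdot)\big)=\Big(\sum_{n=0}^{2m}A_n\big((\mathcal{F})\tfrac{\partial}{\partial x}\big)^{n}\Big)\times(a_k(t))_{k\in\mathbb{N}}$. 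The ODE just recorded shows the two resulting coefficient vectors are equal, so, $G_F$ being injective, the two functions coincide; this is (2.17). Putting $t=0$ in (2.19) and using $e^{0}=1$, $\cos 0=1$, $\sin 0=0$ returns (2.18), so the initial condition holds as well.

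There is no serious analytic obstacle: all the genuine convergence issues — the legitimacy of term-by-term differentiation in $t$ and the membership of $\Psi(t,\cdot)$ in $FD^{(2m)}[-l,l[$ for each $t$ — have been placed into the hypotheses precisely so that Lemmas 2.3 and 2.5 apply verbatim. The one point requiring a little care is the bookkeeping in the third step: one must verify that term-by-term application of $\tfrac{\partial}{\partial t}$ and of $\sum_{n=0}^{2m}A_n\tfrac{\partial^{n}}{\partial x^{n}}$ to the series (2.19) corresponds exactly, under $G_F$, to the entrywise $t$-derivative and to the action of the matrix $\sum_{n=0}^{2m}A_n((\mathcal{F})\tfrac{\partial}{\partial x})^{n}$ on the coefficient sequence, which is what makes the ODE of Lemma 2.5 directly usable.
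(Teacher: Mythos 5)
Your proposal is correct and follows exactly the route the paper intends: the paper offers no detailed argument, merely asserting that Corollary 2.6 is ``a simple consequence of Lemma 2.5,'' and your write-up supplies precisely that consequence --- identifying the Fourier coefficients of (2.19) with the cellular-matrix solution $D(t)\times(C_k)_{k\in\mathbb{N}}$ from Lemma 2.5 (with zero inhomogeneity) and transporting the equation back through $G_F$ via Lemma 2.3, with the hypotheses covering the term-by-term differentiation and the $FD^{(2m)}$ membership. No gaps beyond the level of rigor the paper itself adopts.
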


\section{Solution of a linear  partial
differential equation of the higher order in two variables with
initial condition when coefficients are real-valued simple step
functions}

Let $0=t_0<\cdots <t_I=T$ and $-l=x_0<\cdots<x_J=l$. Suppose that
$$A_n(t,x)=\sum_{i=0}^{I-1}\sum_{j=0}^{J-1}A_n^{(i,j)}\times
\chi_{[t_i,t_{i+1}[\times [x_j,x_{j+1}[}(t,x).
$$
 ~For $m \ge 1$, let us consider a partial
differential equation
$$
\frac{\partial}{\partial t}\Psi(t,x)=\sum_{n=0}^{2m}
A_n(t,x)\frac{\partial^{n}}{\partial x^{n}}\Psi(t,x)~((t,x) \in
[0,T[\times [-l,l[)\eqno(3.1)
$$
with initial condition
$$\Psi(0,x)=\frac{c_0}{2}+\sum_{k=1}^{\infty}c_k
\cos\Big(\frac{k \pi x}{l}\Big)+d_k \sin \Big(\frac{k \pi
x}{l}\Big) \in FD^{(0)}[-l,l[. \eqno(3.2)
$$

\begin{defn}We say that $\Psi(t,x)$ is a weak solution of
(3.1)-(3.2) if the following conditions hold:

(i) ~$\Psi(t,x)$ satisfies (3.1) for each $(t,x) \in [0,T[\times
[-l,l[$ for which $t \neq t_i(0 \le i \le I)$ or $x \neq x_j (0
\le j \le J)$;

(ii)~$\Psi(t,x)$ satisfies (3.2);

(iii)~ for each fixed $x \in [-l,l[$, the function $\Psi(t,x)$ is continuous with respect to $t \in [0,T[$, and
for each $t \in
[0,T[$ the function $\Psi(t,x)$ is continuous with respect to $x$ on $[-l,l[$  except points  $\{ x_j: 0 \le j \le J-1 \}$.

\end{defn}

First, let fix $j$ and consider  a partial differential equation
$$
\frac{\partial}{\partial t}\Psi_{(0,j)}(t,x)=\sum_{n=0}^{2m}
A^{(0,j)}_n(t,x)\frac{\partial^{n}}{\partial
x^{n}}\Psi_{(0,j)}(t,x)~((t,x) \in [0,+\infty[\times
[-l,l[)\eqno(0.j)(PDE)
$$
with initial condition
$$\Psi_{(0,j)}(t_0,x)=\frac{c_0}{2}+\sum_{k=1}^{\infty}c_k
\cos\Big(\frac{k \pi x}{l}\Big)+d_k \sin \Big(\frac{k \pi
x}{l}\Big)= $$
$$\frac{c^{(0,j)}_0}{2}+\sum_{k=1}^{\infty}c^{(0,j)}_k
\cos\Big(\frac{k \pi x}{l}\Big)+d^{(0,j)}_k \sin \Big(\frac{k \pi
x}{l}\Big)   \in FD^{(0)}[-l,l[,\eqno(0,j)(IC)
$$

By Corollary 2.6, under some restrictions on $(\frac{c_0}{2}, c_1,
d_1, c_2, d_2, \dots )$, a series $\Psi_{(0,j)}(t,x)$ defined by
$$ \Psi_{(0,j)}(t,x)=\frac{e^{tA^{(0,j)}_0}c^{(0,j)}_0}{2}+\sum_{k=1}^{\infty} e^{\sigma^{(0,j)}_k t}\Big((c^{(0,j)}_k\cos (\omega^{(0,j)}_k t)+
$$
$$d^{(0,j)}_k\sin(\omega^{(0,j)}_k t))\cos(\frac{k\pi x}{l})+
(d^{(0,j)}_k\cos (\omega^{(0,j)}_k t)-c^{(0,j)}_k
\sin(\omega^{(0,j)}_k t))\sin(\frac{k\pi x}{l})\Big)\eqno(3.3)
$$ is a
 solution of $(0.j)(PDE)$-$(0,j)(IC)$,

Now let consider a partial differential equation
$$
\frac{\partial}{\partial t}\Psi_{(1,j)}(t,x)=\sum_{n=0}^{2m}
A^{(1,j)}_n(t,x)\frac{\partial^{n}}{\partial
x^{n}}\Psi_{(1,j)}(t,x)~((t,x) \in [0,+\infty[\times
[-l,l[)\eqno(1.j)(PDE)
$$
with initial condition
$$\Psi_{(1,j)}(t_1,x)=\Psi_{(0,j)}(t_1,x) \eqno(1,j)(IC)$$

We wiil try to present  the solution of the  $(1,j)(PDE)$ by the following form

$$
 \Psi_{(1,j)}(t,x)=\frac{e^{tA^{(1,j)}_0}c^{(1,j)}_0}{2}+\sum_{k=1}^{\infty} e^{\sigma^{(1,j)}_k t}\Big((c^{(1,j)}_k\cos (\omega^{(1,j)}_k t)+
$$
$$
d^{(1,j)}_k\sin(\omega^{(1,j)}_k t))\cos(\frac{k\pi x}{l})+
(d^{(1,j)}_k\cos (\omega^{(1,j)}_k t)-c^{(1,j)}_k
\sin(\omega^{(1,j)}_k t))\sin(\frac{k \pi x}{l})\Big)\eqno(3.4)
$$

In order to get validity of the condition  $(1,j)(IC)$, we  consider  the following  infinite system of equations:

 $$
 \frac{e^{t_1A^{(1,j)}_0}c^{(1,j)}_0}{2}=\frac{e^{t_1A^{(0,j)}_0}c^{(0,j)}_0}{2},\eqno(3.5)
$$

$$
 e^{\sigma^{(1,j)}_k t_1}(c^{(1,j)}_k\cos (\omega^{(1,j)}_k t_1)+d^{(1,j)}_k\sin(\omega^{(1,j)}_k t_1))=$$

$$
e^{\sigma^{(0,j)}_k t_1}(c^{(0,j)}_k\cos (\omega^{(0,j)}_k t_1)+d^{(0,j)}_k\sin(\omega^{(0,j)}_k t_1))(k \in \mathbb{N}),\eqno(3.6)
$$

$$
 e^{\sigma^{(1,j)}_k t_1}(d^{(1,j)}_k\cos (\omega^{(1,j)}_k t_1)-c^{(1,j)}_k
\sin(\omega^{(1,j)}_k t_1))= e^{\sigma^{(0,j)}_k t_1}(d^{(0,j)}_k\cos (\omega^{(0,j)}_k t_1)-c^{(0,j)}_k
\sin(\omega^{(0,j)}_k t_1)) (k \in \mathbb{N}).\eqno(3.7)
$$

We have
$$
 c^{(1,j)}_0=e^{t_1(A^{(0,j)}_0-A^{(1,j)}_0)}c^{(0,j)}_0.\eqno(3.8)
$$
For $k \in \mathbb{N}$ we can rewrite  equations (3.6)-(3.7) as follows:

$$
 c^{(1,j)}_k\cos (\omega^{(1,j)}_k t_1)+d^{(1,j)}_k\sin(\omega^{(1,j)}_k t_1))=
e^{(\sigma^{(0,j)}_k -\sigma^{(1,j)}_k )t_1}(c^{(0,j)}_k\cos (\omega^{(0,j)}_k t_1)+d^{(0,j)}_k\sin(\omega^{(0,j)}_k t_1)), \eqno(3.9)
$$

$$
 -c^{(1,j)}_k \sin(\omega^{(1,j)}_k t_1)+d^{(1,j)}_k\cos (\omega^{(1,j)}_k t_1) = e^{(\sigma^{(0,j)}_k - \sigma^{(1,j)}_k)  t_1}(d^{(0,j)}_k\cos (\omega^{(0,j)}_k t_1)-c^{(0,j)}_k\sin(\omega^{(0,j)}_k t_1)). \eqno(3.10)
$$
Setting
$$\mathbb{A}=e^{(\sigma^{(0,j)}_k -\sigma^{(1,j)}_k )t_1}(c^{(0,j)}_k\cos (\omega^{(0,j)}_k t_1)+d^{(0,j)}_k\sin(\omega^{(0,j)}_k t_1))\eqno(3.11)
$$
and
$$
\mathbb{B}=e^{(\sigma^{(0,j)}_k - \sigma^{(1,j)}_k)  t_1}(d^{(0,j)}_k\cos (\omega^{(0,j)}_k t_1)-c^{(0,j)}_k
\sin(\omega^{(0,j)}_k t_1)),\eqno(3.12)
$$
for $k \in \mathbb{N}$  we obtain
$$
 c^{(1,j)}_k\cos (\omega^{(1,j)}_k t_1)+d^{(1,j)}_k\sin(\omega^{(1,j)}_k t_1)=\mathbb{A} \eqno(3.13)
 $$
and
$$
 -c^{(1,j)}_k
\sin(\omega^{(1,j)}_k t_1)+d^{(1,j)}_k\cos (\omega^{(1,j)}_k t_1) = \mathbb{B}.\eqno(3.14)
$$
It is obvious that the system of equations (3.13)-(3.14) has the unique solution which can be done as follows:
$$
c^{(1,j)}_k=\mathbb{A}\cos (\omega^{(1,j)}_k t_1)-\mathbb{B}\sin(\omega^{(1,j)}_k t_1)\eqno(3.15)
$$
and
$$
d^{(1,j)}_k=\mathbb{B}\cos (\omega^{(1,j)}_k t_1)+\mathbb{A}\sin(\omega^{(1,j)}_k t_1)\eqno(3.16)
$$
for $k \in \mathbb{N}$.

By Corollary 2.6, under some restrictions on
$(\frac{c^{(1,j)}_0}{2}, c^{(1,j)}_1, d^{(1,j)}_1, c^{(1,j)}_2,
d^{(1,j)}_2, \dots )$, the series $\Psi_{(1,j)}(t,x)$ defined by (3.4) is the
 solution of $(1.j)(PDE)$-$(1,j)(IC)$,

It is obvious that under nice restrictions on coefficients participated in (3.1) and (3.2), we  can continue our procedure step by step. Correspondingly we can construct a sequence
$(\Psi_{(s,j)})_{0 \le s \le I-1, 1 \le j \le J-1}$ such that
$\Psi_{(s,j)}$ satisfies a linear partial differential equation
$$
\frac{\partial}{\partial t}\Psi_{(s,j)}(t,x)=\sum_{n=0}^{2m}
A^{(k,j)}_n(t,x)\frac{\partial^{n}}{\partial
x^{n}}\Psi_{(s,j)}(t,x)~((t,x) \in [0,+\infty[\times
[-l,l[)\eqno(s,j)(PDE)
$$
with initial condition
$$\Psi_{(s,j)}(t_s,x)=\Psi_{(s-1,j)}(t_s,x)=
\frac{c^{(s,j)}_0}{2}+\sum_{k=1}^{\infty}c^{(s,j)}_k
\cos\Big(\frac{k \pi x}{l}\Big)+d^{(s,j)}_k \sin \Big(\frac{k \pi
x}{l}\Big)\eqno(s,j)(IC).
$$

Now it is obvious to observe that we have proved the validity of
the following assertion.

\begin{thm}If for coefficients $(\frac{c^{(i,j)}_0}{2}, c^{(i,j)}_1, d^{(i,j)}_1,
c^{(i,j)}_2, d^{(i,j)}_2, \dots )(1 \le i \le I,1 \le j \le J) $
functions $\Psi_{(i,j)}(t,x)$ satisfy conditions of Corollary 2.6,
then a function $\Psi(t,x):[0,T[\times [-l,l[\to R $ defined by
$$\sum_{i=0}^{I-1}\sum_{j=0}^{J-1}\Psi_{(i,j)}(x,t)\times
\chi_{[t_i,t_{i+1}[\times [x_j,x_{j+1}[}(t,x)\eqno(3.17)$$ is a
weak solution of (3.1)-(3.2).
\end{thm}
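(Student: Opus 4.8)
The plan is to verify, one at a time, the three conditions (i)--(iii) of Definition~3.1, using throughout the single structural observation that on each half-open rectangle $R_{ij}:=[t_i,t_{i+1}[\times[x_j,x_{j+1}[$ the function $\Psi$ of (3.17) coincides identically with $\Psi_{(i,j)}$, while the step functions $A_n$ are constant on $R_{ij}$ with value $A_n^{(i,j)}$. In this way all the genuine analytic content --- pointwise convergence of the Fourier-type expansions of the form (3.3)--(3.4) together with their term-by-term differentiability in $x$ and in $t$ --- is already packaged into the standing assumption that each $\Psi_{(i,j)}$ satisfies the hypotheses of Corollary~2.6, and what remains is essentially a gluing (bookkeeping) argument.

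First I would dispatch condition (i). Given $(t,x)$ with $t\neq t_i$ for all $i$ or $x\neq x_j$ for all $j$, the point still lies in a unique rectangle $R_{ij}$ on which $\Psi\equiv\Psi_{(i,j)}$; since differentiation is a local operation, every partial derivative of $\Psi$ appearing in (3.1) equals the corresponding derivative of $\Psi_{(i,j)}$ at $(t,x)$ --- reading derivatives one-sidedly (from the right in $t$ at a grid time, from the right in $x$ at a grid point), which is all that is needed since $\Psi$ agrees with $\Psi_{(i,j)}$ throughout $R_{ij}$ and, by Corollary~2.6, $\Psi_{(i,j)}$ itself admits those term-by-term derivatives. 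Because also $A_n(t,x)=A_n^{(i,j)}$ there, equation (3.1) at $(t,x)$ becomes exactly equation $(i,j)(PDE)$ at $(t,x)$, which holds by Corollary~2.6 and the hypothesis on $\Psi_{(i,j)}$. Condition (ii) is then immediate: for $x\in[x_j,x_{j+1}[$ one has $\Psi(0,x)=\Psi_{(0,j)}(t_0,x)$, which by the initial condition $(0,j)(IC)$ equals $\frac{c_0}{2}+\sum_{k=1}^{\infty}c_k\cos(k\pi x/l)+d_k\sin(k\pi x/l)$; letting $j=0,\dots,J-1$ exhaust $[-l,l[$ gives (3.2).

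For condition (iii) I would argue as follows. Fixing $x$, on each $[t_i,t_{i+1}[$ the map $t\mapsto\Psi(t,x)=\Psi_{(i,j)}(t,x)$ is continuous because the defining series is, by Corollary~2.6, differentiable --- hence continuous --- in $t$; at an interior node $t=t_i$ ($1\le i\le I-1$) the left-hand limit is $\lim_{t\uparrow t_i}\Psi_{(i-1,j)}(t,x)=\Psi_{(i-1,j)}(t_i,x)$, since $\Psi_{(i-1,j)}$ is continuous on all of $[0,+\infty[$, while the value at $t_i$ is $\Psi_{(i,j)}(t_i,x)$, and the two coincide by the chained initial condition $(i,j)(IC)$; hence $t\mapsto\Psi(t,x)$ is continuous on $[0,T[$. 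Fixing $t$ instead, $x\mapsto\Psi(t,x)=\Psi_{(i,j)}(t,x)$ is continuous on each $[x_j,x_{j+1}[$, and since the construction imposes no matching across $x=x_j$, discontinuities can occur only at the points $\{x_j:0\le j\le J-1\}$, exactly as (iii) allows. The only place that calls for a little care is this matching at the temporal interfaces $t=t_i$: one must genuinely know that each $\Psi_{(i-1,j)}$ extends continuously up to and including $t=t_i$ and then invoke $(i,j)(IC)$ --- there is no new convergence issue here, since the series (3.3)--(3.4) are, by assumption, defined and well behaved for every $t\ge0$. Once this is settled, conditions (i)--(iii) all hold and the theorem is proved.
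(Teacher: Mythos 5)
Your proof is correct and follows essentially the same route as the paper: the construction of the pieces $\Psi_{(i,j)}$ via Corollary 2.6 together with the chained matching conditions $(i,j)(IC)$ is exactly your gluing argument, the paper simply declaring the final verification ``obvious.'' Your explicit checks of conditions (i)--(iii) of Definition 3.1 --- the one-sided reading of derivatives on the grid lines and the continuity in $t$ across the nodes $t_i$ --- supply precisely the details the paper omits.
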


\medskip

\begin{ex} Let consider a linear partial differential equation of the $22$
order in two variables
$$
\frac{\partial}{\partial t}\Psi(t,x)= A(t,x)\times
\frac{\partial^{2}}{\partial x^{2}}\Psi(t,x)+B(t,x)\times
\frac{\partial^{22}}{\partial x^{22}}\Psi(t,x)~((t,x) \in
[0,2\pi[\times [0,\pi[)\eqno(3.18)
$$
with initial condition
$$\Psi(0,x)=\frac{0.015}{2}+5\sin (x), \eqno(3.19)
$$
where
$$ A(t,x)=\chi_{[0,\pi[\times [0,\pi[}(t,x)+1.55 \times
\chi_{[\pi,2\pi[\times [0,\pi[}(t,x)$$

and

$$ B(t,x)=2\times \chi_{[0,\pi[\times [0,\pi[}(t,x)-2 \times
\chi_{[\pi,2\pi[\times [0,\pi[}(t,x).$$

\medskip

The programm in MathLab for a solution of $(3.18)-(3.19)$, has the
following form:{\bf

\medskip

$A1=[0,1,0,0,0,0,0,0,0,0,0,0,0,0,0,0,0,0,0,0,0,2];$

$A2=[0,1.55,0,0,0,0,0,0,0,0,0,0,0,0,0,0,0,0,0,0,0,-2];$

$C1=[0,0,0,0,0,0,0,0,0,0, 0,0,0,0,0, 0,0,0,0,0];$

$D1=[5,0,0,0,0,0,0,0,0,0, 0,0,0,0,0, 0,0,0,0,0];$

$A10=0; A20=0; C10=0.015;$

for $k=1:20$

$S1(k)=A10;   S2(k)=A20;$

for $n=1:10$

$S1(k)=S1(k)+(-1)^(n)*A1(2*n)*k^(2*n);$

$S2(k)=S2(k)+(-1)^(n)*A2(2*n)*k^(2*n);$

end

end

for $k=1:20$

$O1(k)=0;$

$O2(k)=0;$

end

for $k=1:20$

for $n=1:10$

$O1(k)=O1(k) +(-1)^n*A1(2*n+1)*k^(2*n+1);$

$O2(k)=O2(k) +(-1)^n*A2(2*n+1)*k^(2*n+1);$

end

end

$[T1,X1]=\mbox{meshgrid}(0:( pi/10):pi, 0 : ( pi/10):pi);$

$Z1=0.5* C10*\exp(T1.*A10);$

for  $k=1:20$

$Z1=Z1+ C1(k)*\exp(T1*S1(k)).*\cos(X1.*k).* \cos(T1*O1(k))+D1(1)*\exp(T1*S1(k)).*\cos(X1.*k).* \sin(T1*O1(k))+ $

$D1(k)*\exp(T1*S1(k)).* \sin(X1.*k).* \cos(T1*O1(k))-   C1(k)*\exp(T1*S1(k)).* \sin(X1.*k).* \sin(T1*O1(k));$

end

$C20= \exp (pi* (A10-A20))* C10;$

for $k=1:20$

$A(k)= \exp( (S1(k)- S2(k))*pi) *(C1(k) *\cos (O1(k)*pi)+D1(k)* \sin(O1(k)*pi));$

$B(k)= \exp( (S1(k)- S2(k))*pi)* (D1(k)* \cos (O1(k)*pi)-C1(k)* \sin(O1(k)*pi));$

end

for  $k=1:20$

$C2(k) =A(k)*\cos (O2(k)*pi)- B(k)*\sin(O2(k)*pi);$

$D2(k) =B(k)*\cos (O2(k)*pi)+ A(k)*\sin(O2(k)*pi);$

end

$[T2,X2]=\mbox{meshgrid} (pi:( pi/10): (2*pi), 0 : ( pi/10):pi);$

$Z2=0.5* C20* exp((T2)*A20);$

for  $k=1:20$

$Z2=Z2+ C2(k)*\exp(T2*S2(k)).*\cos(X2.*k).* \cos(T2*O2(k))+D2(1)*\exp(T2*S1(k)).*\cos(X2.*k).* \sin(T2*O2(k))+ $

$D2(k)*\exp(T2*S2(k)).* \sin(X2.*k).* \cos(T2*O2(k))-   C2(k)*\exp(T2*S2(k)).* \sin(X2.*k).* \sin(T2*O2(k));$

end

$\mbox{surf}(T1,X1,Z1)$

hold on

$\mbox{surf}(T2,X2,Z2)$

hold off}

\begin{figure}[h]
\center{\includegraphics[width=1\linewidth]{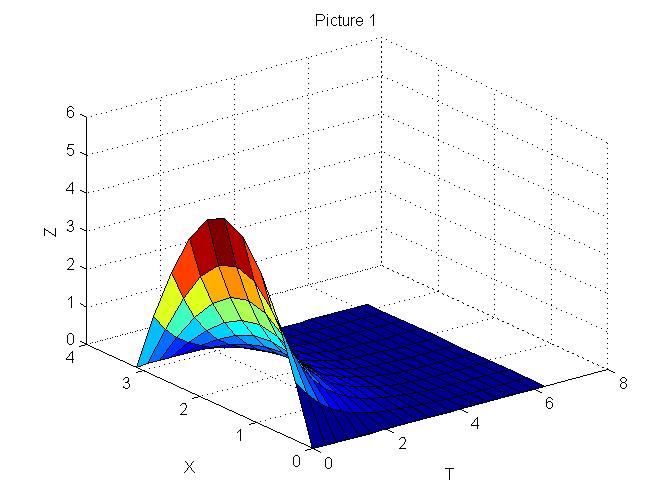}}
\caption{Graphic of the solution of the LPDE-(3.18) with IC-(3.19)).} \label{ris:image}
\end{figure}

\end{ex}

\begin{ex}Let consider a linear partial differential equation of the $22$
order in two variables
$$
\frac{\partial}{\partial t}\Psi(t,x)=A(t,x)\Psi(t,x)+ B(t,x)\times
\frac{\partial^{2}}{\partial x^{2}}\Psi(t,x)+C(t,x)\times
\frac{\partial^{3}}{\partial x^{3}}\Psi(t,x)+
$$
\begin{figure}[h]
\center{\includegraphics[width=1\linewidth]{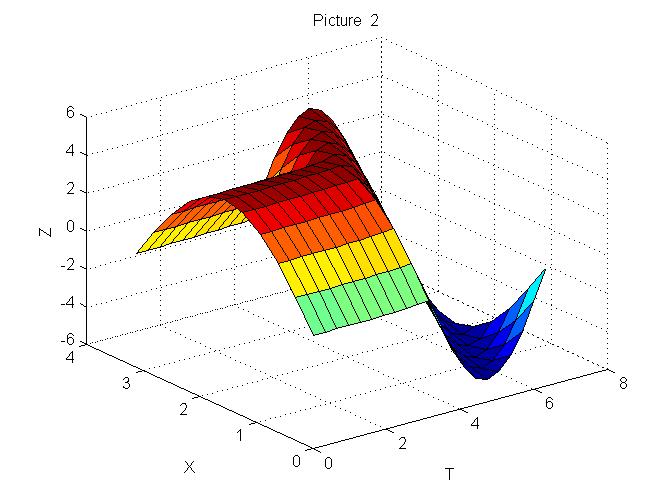}}
\caption{Graphic of the solution of the LPDE-(3.20) with IC-(3.21)).} \label{ris:image}
\end{figure}

$$D(t,x)\times
\frac{\partial^{22}}{\partial x^{22}}\Psi(t,x)~((t,x) \in
[0,2\pi[\times [0,\pi[)\eqno(3.20)
$$
with initial condition
$$\Psi(0,x)=\frac{0.015}{2}+5\sin (x), \eqno(3.21)
$$
where
$$ A(t,x)=1\chi_{[0,\pi[\times [0,\pi[}(t,x)+0 \times
\chi_{[\pi,2\pi[\times [0,\pi[}(t,x),$$
$$ B(t,x)=\chi_{[0,\pi[\times [0,\pi[}(t,x)+0 \times
\chi_{[\pi,2\pi[\times [0,\pi[}(t,x),$$
$$ C(t,x)=0\times \chi_{[0,\pi[\times [0,\pi[}(t,x)+1 \times
\chi_{[\pi,2\pi[\times [0,\pi[}(t,x)$$
and
$$ D(t,x)=2\times \chi_{[0,\pi[\times [0,\pi[}(t,x)+2 \times
\chi_{[\pi,2\pi[\times [0,\pi[}(t,x).$$

\medskip

The graphical solution of (3.20)-(3.21) can be obtained by MathLab programm used in Example 3.3 for the following data:

$A1=[0,1,0,0,0,0,0,0,0,0,0,0,0,0,0,0,0,0,0,0,0,2];$

$A2=[0,0,1,0,0,0,0,0,0,0,0,0,0,0,0,0,0,0,0,0,0,2];$

$C1=[0,0,0,0,0,0,0,0,0,0, 0,0,0,0,0, 0,0,0,0,0];$

$D1=[5,0,0,0,0,0,0,0,0,0, 0,0,0,0,0, 0,0,0,0,0];$

$A10=1; A20=0; C10=0.015;$

\end{ex}

\begin{ex}Let consider a linear partial differential equation of the $21$
order in two variables with constant coefficients
$$
\frac{\partial}{\partial t}\Psi(t,x)=A(t,x)\Psi(t,x)+
\frac{\partial^{2}}{\partial x^{2}}\Psi(t,x)+2
\frac{\partial^{21}}{\partial x^{21}}\Psi(t,x)~((t,x) \in
[0,2\pi[\times [0,\pi[)\eqno(3.22)
$$
\begin{figure}[h]
\center{\includegraphics[width=1\linewidth]{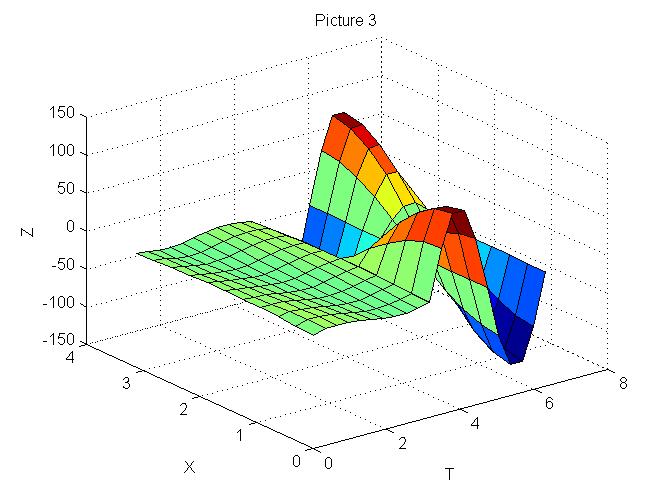}}
\caption{Graphic of the solution of the LPDE-(3.22) with IC-(3.23)).} \label{ris:image}
\end{figure}
with initial condition
$$\Psi(0,x)=\frac{0.15}{2}+5\sin (x), \eqno(3.23)
$$
where
$$ A(t,x)=1\chi_{[0,\pi[\times [0,\pi[}(t,x)+0 \times
\chi_{[\pi,2\pi[\times [0,\pi[}(t,x).$$

Since
$$ 1=1 \times\chi_{[0,\pi[\times [0,\pi[}(t,x)+1 \times
\chi_{[\pi,2\pi[\times [0,\pi[}(t,x)$$
and
$$ 2=2\times \chi_{[0,\pi[\times [0,\pi[}(t,x)+2 \times
\chi_{[\pi,2\pi[\times [0,\pi[}(t,x),$$
the graphical solution of (3.22)-(3.23) can be obtained by MathLab programm used in Example 3.3 for the following data:

$A1=[0,1,0,0,0,0,0,0,0,0,0,0,0,0,0,0,0,0,0,0,2,0];$

$A2=[0,1,0,0,0,0,0,0,0,0,0,0,0,0,0,0,0,0,0,0,2,0];$

$C1=[0,0,0,0,0,0,0,0,0,0, 0,0,0,0,0, 0,0,0,0,0];$

$D1=[5,0,0,0,0,0,0,0,0,0, 0,0,0,0,0, 0,0,0,0,0];$

$A10=1; A20=0; C10=0.15;$

\end{ex}

\begin{ex}
Let consider a linear partial differential equation of the $21$
order in two variables
$$
\frac{\partial}{\partial t}\Psi(t,x)=A(t,x)\Psi(t,x)+ B(t,x)\times
\frac{\partial^{2}}{\partial x^{2}}\Psi(t,x)+100\frac{\partial^{3}}{\partial x^{3}}\Psi(t,x)
2\frac{\partial^{21}}{\partial x^{21}}\Psi(t,x)~((t,x) \in
[0,2\pi[\times [0,\pi[)\eqno(3.24)
$$
\begin{figure}[h]
\center{\includegraphics[width=1\linewidth]{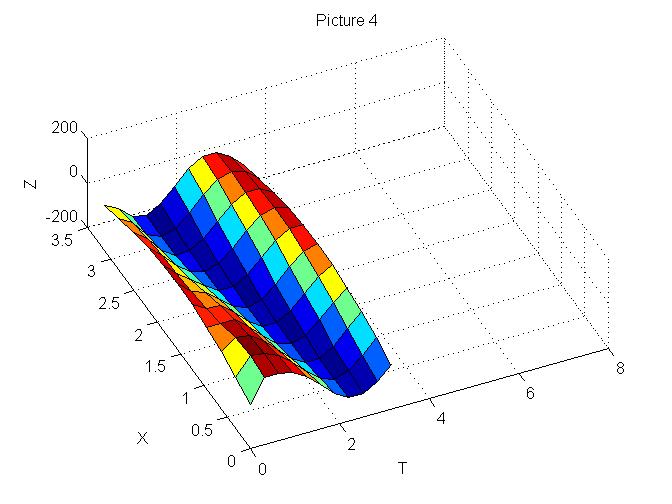}}
\caption{Graphic of the solution of the LPDE-(3.24) with IC-(3.25)).} \label{ris:image}
\end{figure}

with initial condition
$$\Psi(0,x)=\frac{0.015}{2}+100\sin (x), \eqno(3.25)
$$
where
$$ A(t,x)=1\chi_{[0,\pi[\times [0,\pi[}(t,x)+0
\chi_{[\pi,2\pi[\times [0,\pi[}(t,x)$$
and
$$ B(t,x)=\chi_{[0,\pi[\times [0,\pi[}(t,x)-
\chi_{[\pi,2\pi[\times [0,\pi[}(t,x).$$

\medskip

The graphical solution of (3.24)-(3.25) can be obtained by MathLab programm used in Example 3.3 for the following data:

$A1=[0,1,100,0,0,0,0,0,0,0,0,0,0,0,0,0,0,0,0,0,2,0];$

$A2=[0,-1,100,0,0,0,0,0,0,0,0,0,0,0,0,0,0,0,0,0,2,0];$

$C1=[0,0,0,0,0,0,0,0,0,0, 0,0,0,0,0, 0,0,0,0,0];$

$D1=[100,0,0,0,0,0,0,0,0,0, 0,0,0,0,0, 0,0,0,0,0];$

$A10=1; A20=0; C10=0.15;$

We see that we have no graphic on the region $[\pi,2\pi[\times [0,\pi[$ which hints us that  coefficients of the
LPDE (3.24)-(3.25)  on that region do not satisfy conditions of Theorem 3.2.
\end{ex}

\begin{ex}Let consider a linear partial differential equation of the $21$
order  in two variables
$$
\frac{\partial}{\partial t}\Psi(t,x)=A(x,t)\Psi(t,x)+
50\frac{\partial^5}{\partial x^5}\Psi(t,x)+ \frac{\partial^6}{\partial x^6}\Psi(t,x) + 2
\frac{\partial^{21}}{\partial x^{21}}\Psi(t,x)~((t,x) \in
[0,2\pi[\times [0,\pi[)\eqno(3.26)
$$
\begin{figure}[h]
\center{\includegraphics[width=1\linewidth]{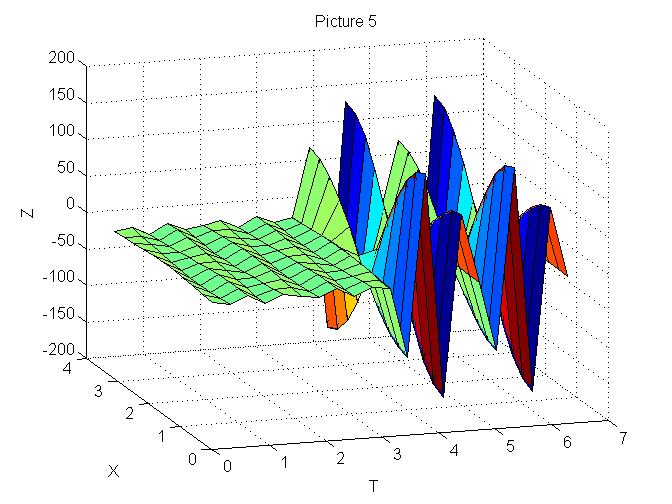}}
\caption{Graphic of the solution of the LPDE-(3.26) with IC-(3.27)).} \label{ris:image}
\end{figure}
with initial condition
$$\Psi(0,x)=\frac{0.15}{2}+5\sin (x), \eqno(3.27).
$$
where
$$ A(t,x)=\chi_{[0,\pi[\times [0,\pi[}(t,x)+0\times
\chi_{[\pi,2\pi[\times [0,\pi[}(t,x).$$

The graphical solution of (3.26)-(3.27) can be obtained by MathLab programm used in Example 3.3 for the following data:

$A1=[0,0,0,0,50,1,0,0,0,0,0,0,0,0,0,0,0,0,0,0,2,0];$

$A2=[0,0,0,0,50,1,0,0,0,0,0,0,0,0,0,0,0,0,0,0,2,0];$

$C1=[0,0,0,0,0,0,0,0,0,0, 0,0,0,0,0, 0,0,0,0,0];$

$D1=[7,0,0,0,0,0,0,0,0,0, 0,0,0,0,0, 0,0,0,0,0];$

$A10=1; A20=0; C10=0.15;$

\end{ex}

\begin{figure}[h]
\center{\includegraphics[width=1\linewidth]{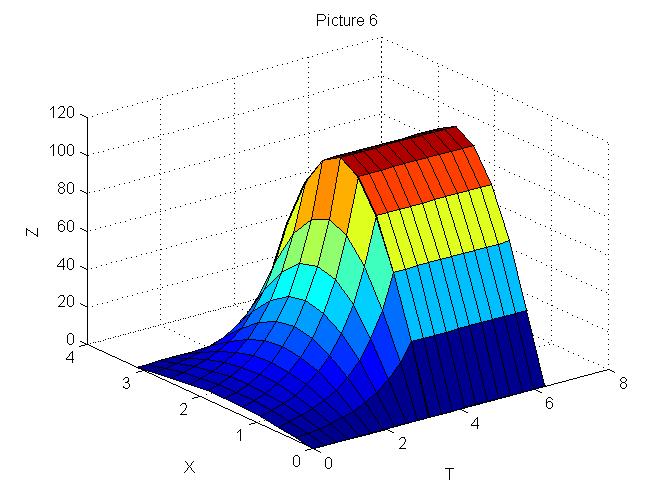}}
\caption{Graphic of the solution of the LPDE-(3.28) with IC-(3.29)).} \label{ris:image}
\end{figure}
\begin{ex}Let consider a linear partial differential equation of the $22$
order  in two variables
$$
\frac{\partial}{\partial t}\Psi(t,x)=A(x,t)\Psi(t,x)-
\frac{\partial}{\partial x}\Psi(t,x)+2
\frac{\partial^{22}}{\partial x^{22}}\Psi(t,x)~((t,x) \in
[0,2\pi[\times [0,\pi[)\eqno(3.28)
$$

with initial condition
$$\Psi(0,x)=\frac{0.015}{2}+5\sin (x), \eqno(3.29).
$$
where
$$ A(t,x)=\chi_{[0,\pi[\times [0,\pi[}(t,x)+0\times
\chi_{[\pi,2\pi[\times [0,\pi[}(t,x).$$

The graphical solution of (3.28)-(3.29) can be obtained by MathLab programm used in Example 3.3 for the following data:

$A1=[-1,0,0,0,0,0,0,0,0,0,0,0,0,0,0,0,0,0,0,0,0,2];$

$A2=[-1,0,0,0,0,0,0,0,0,0,0,0,0,0,0,0,0,0,0,0,0,2];$

$C1=[0,0,0,0,0,0,0,0,0,0, 0,0,0,0,0, 0,0,0,0,0];$

$D1=[5,0,0,0,0,0,0,0,0,0, 0,0,0,0,0, 0,0,0,0,0];$

$A10=1; A20=0; C10=0.015;$

\end{ex}


\begin{rem}
 Notice that for each natural number $M>1$, one
can easily modify the MathLab program described in Example 3.3 for obtaining the
graphical solution of the linear partial differential equation
(3.1)-(3.2) whose coefficients $(A_n(t,x))_{0 \le n \le 2M}$ are
real-valued simple step functions on $[0,T[ \times [-l,l[$ and $f$
is a trigonometric polynomial on $[-l,l[$.
\end{rem}

\begin{rem} Since each  constant $c$  same times is a step
function  we can use MathLab program described in Example 3.3 for a solution of the linear partial
differential equation (2.17)-(2.18) with constant coefficients.

For example, if we consider LPDE
$$
\frac{\partial}{\partial t}\Psi(t,x)=-\frac{\partial}{\partial
x}\Psi(t,x) + \frac{\partial^{3}}{\partial
x^{3}}\Psi(t,x)-\frac{\partial^{5}}{\partial
x^{5}}\Psi(t,x)+ \frac{\partial^{7}}{\partial
x^{7}}\Psi(t,x)+2 \frac{\partial^{22}}{\partial
x^{22}}\Psi(t,x)~((t,x) \in [0,2\pi[\times [0,\pi[)\eqno(3.30)
$$
\begin{figure}[h]
\center{\includegraphics[width=1\linewidth]{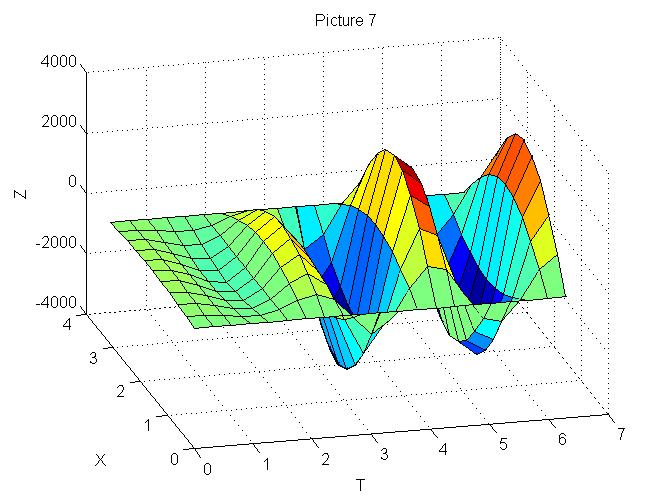}}
\caption{Graphic of the solution of the LPDE-(3.30) with IC-(3.1)).} \label{ris:image}
\end{figure}

with initial condition
$$\Psi(0,x)=\frac{0.015}{2}+150 \sin (x), \eqno(3.31)
$$
then for a solution  (3.30)-(3.31),in MathLab programm  described in Example 3.3  we must enter  the following data:

$A1=[-1,0,1,0,-1,0,1,0,0,0,0,0,0,0,0,0,0,0,0,0,0,2];$

$A2=[-1,0,1,0,-1,0,1,0,0,0,0,0,0,0,0,0,0,0,0,0,0,2];$

$C1=[0,0,0,0,0,0,0,0,0,0, 0,0,0,0,0, 0,0,0,0,0];$

$D1=[150,0,0,0,0,0,0,0,0,0, 0,0,0,0,0, 0,0,0,0,0];$

$A10=1; A20=0; C10=0.015;$

\end{rem}

\begin{rem} The approach used for a solution of
(3.1)-(3.2) can be used in such a case when coefficients
$(A_n(t,x))_{0\le n \le 2M}$ are rather smooth continuous functions on
$[0,T[\times [-l,l[$. If we will approximate $(A_n(t,x))_{0\le n
\le 2M}$ by real-valued simple step functions, then it is natural
to wait that under some "nice restrictions" on $(A_n(t,x))_{0\le n
\le 2M}$ the solutions obtained by Theorem 3.2, will give us a
"good approximation" of the solution of the required linear
partial differential equation of the higher  order in two variables  with corresponding initial
conditions.
\end{rem}



\end{document}